\newcommand{\numberset}{\mathbb}
\newcommand{\N}{\numberset{N}}
\newcommand{\R}{\numberset{R}}
\newcommand{\Z}{\numberset{Z}}
\newcommand{\pp}{\ensuremath{\mathcal P}}
\theoremstyle{plain}
\newtheorem{theorem}{Theorem}
\newtheorem{corollary}{Corollary}
\newtheorem{lemma}{Lemma}
\newtheorem{conjecture}{Conjecture}
\newtheorem{problem}[conjecture]{{Problem}}
\theoremstyle{definition}
\newtheorem{definition}{Definition}
\theoremstyle{remark}
\theoremstyle{definition}
\author{Bruno Federici\thanks{Supported by an EPSRC grant EP/L505110/1.} \and Agelos Georgakopoulos\thanks{Supported by EPSRC grant EP/L002787/1.  This project has received funding from the European Research Council (ERC) under the European Union’s Horizon 2020 research and innovation programme (grant agreement No 639046). The second author would like to thank the Isaac Newton Institute for Mathematical Sciences, Cambridge, for support and hospitality during the programme `Random Geometry' where work on this paper was undertaken.}\and 
\\
 {Mathematics Institute}\\
 {University of Warwick}\\
 {CV4 7AL, UK}
}
\title{Hyperbolicity vs.\ Amenability for planar graphs}
\begin{document}

\maketitle
\begin{abstract}
The aim of this paper is to clarify the relationship between Gromov-hyperbolicity and amenability for planar maps.
\end{abstract}

\section{Introduction}
Hyperbolicity and non-amenability are important and well-studied properties for groups (where the former implies the latter unless the group is 2-ended). They are also fundamental in the emerging field of coarse geometry \cite{BenCoa}. The aim of this paper is to clarify their relationship for planar graphs that do not necessarily have many symmetries: we show that these properties become equivalent when strengthened by certain additional conditions, but not otherwise.

Let $\pp$ denote the class of plane graphs (aka.\ planar maps), with no accumulation point of vertices and with bounded vertex degrees. Let $\pp'$ denote the subclass of $\pp$ comprising the graphs with no unbounded face. We prove

\begin{theorem}
\label{coroll}
Let $G$ be a graph in $\pp'$. Then $G$ is hyperbolic and weakly non-amenable if and only if it is non-amenable and it has bounded codegree.
\end{theorem}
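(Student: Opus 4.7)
The plan is to prove the two implications separately. Since the statement is labelled as a corollary (note the label \texttt{coroll}), I expect both directions to reduce to structural theorems established earlier in the paper concerning the interplay between hyperbolicity, non-amenability, and bounded face-length in $\pp'$.

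For the direction ``non-amenable with bounded codegree $\Rightarrow$ hyperbolic and weakly non-amenable'': weak non-amenability is immediate, since it is by design a weakening of non-amenability. For hyperbolicity, I would invoke the expected earlier result that within $\pp'$, non-amenability together with bounded codegree forces Gromov-hyperbolicity. The conceptual content is that the edge-isoperimetric inequality, combined with a uniform bound on face-lengths, translates via planar duality into a linear filling inequality for cycles (each cycle is fillable by a union of at most linearly many bounded-length faces), which is the standard characterisation of hyperbolicity for uniformly locally finite geodesic graphs.

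For the converse, assume $G$ is hyperbolic and weakly non-amenable. The first task is to establish bounded codegree. Suppose for contradiction that $G$ has faces of arbitrarily large length. A face $F$ of length $L$ has an empty interior, so its boundary cycle $C$ carries $L$ vertices with all $O(\Delta L)$ exit edges lying on the side of $C$ opposite to $F$. I would exploit this one-sidedness, together with the hyperbolic geometry of $G$, either to produce a subset of $V(C)$ violating the (weak) isoperimetric inequality, or to identify three well-spaced vertices on $C$ whose geodesic triangle cannot be $\delta$-thin because the ``shortcuts'' avoiding $C$ must skirt around $F$. Having secured bounded codegree, I would then upgrade weak non-amenability to full non-amenability by applying the equivalence underlying the other direction: under bounded codegree and hyperbolicity, the various isoperimetric notions for $\pp'$-graphs coincide, so the weak version forces the full version.

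The main obstacle is expected to be the derivation of bounded codegree from hyperbolicity plus only the weak isoperimetric hypothesis. This step requires converting a purely topological feature (a face-cycle of large length) into a quantitative metric obstruction, using only the restricted class of sets to which weak non-amenability applies; the precise shape of the argument will depend on the paper's definition of ``weakly non-amenable'', and the delicate point is to ensure that the witnessing sets one constructs from a long face lie in that restricted class.
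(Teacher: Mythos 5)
Your high-level decomposition is exactly the paper's: Theorem~\ref{coroll} is indeed derived there as an immediate corollary of three finer implications (Theorem~\ref{The:maintheorem}), namely that non-amenability plus bounded codegree gives hyperbolicity, that hyperbolicity plus weak non-amenability gives bounded codegree, and that, in the absence of unbounded faces, hyperbolicity plus weak non-amenability gives non-amenability; and your sketch of the first direction via a linear filling inequality for cycles obtained from the edge-isoperimetric inequality and the face-length bound is precisely the paper's route (Bowditch's criterion, Theorem~\ref{The:Bowditch'scondition}, applied in Theorem~\ref{The:hyperbolic}). Weak non-amenability following formally from non-amenability is also as in the paper.

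The converse direction, however, contains genuine gaps. For bounded codegree, both strategies you propose fail as stated. A long face boundary $C$ is not by itself a witness against weak non-amenability: $V(C)$ has roughly $|C|$ vertices and a boundary of comparable size, so no isoperimetric violation comes from it directly. Nor does hyperbolicity alone forbid long faces --- the paper's example $G_1$ is hyperbolic with unbounded codegree --- because geodesics between well-spaced vertices of $C$ may take shortcuts on the far side of $C$ and form perfectly thin triangles; ``skirting around $F$'' costs nothing metrically. The missing idea is that of a \emph{geodetic cycle}: the paper shows, by iteratively replacing non-geodesic arcs of the face boundary with ``closest'' geodesics (Lemma~\ref{Lem:Shasminimum}, Corollary~\ref{Cor:closestgeodesic}, Theorem~\ref{The:boundedcodegree}), that every face lies inside a geodetic cycle, proves that geodetic cycles in a $\delta$-hyperbolic graph have length at most $6\delta$ (Lemma~\ref{Lem:boundedgeocycle}), and only then applies weak non-amenability --- to the set of \emph{all} vertices inside that short cycle, whose boundary has at most $6\delta\Delta(G)$ vertices --- to bound the face length (Corollary~\ref{Cor:geocyclesuffices}). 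It is this interleaving of the two hypotheses, not either one separately, that does the work. Likewise, your final step (``the various isoperimetric notions coincide, so weak forces full'') is asserted rather than proved; in the paper this is the most involved implication, requiring a closed-walk lemma (Lemma~\ref{Lem:closedwalk}, which is where the no-unbounded-face hypothesis of $\pp'$ actually enters) together with a converse to the linear isoperimetric inequality (Lemma~\ref{Lem:converseofLII}) to get a linear lower bound on $|\partial S|$ for connected induced $S$.
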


Here, the {\em length} of a face is the number of edges on its boundary;  a \textit{bounded} face is a face with finite length; a plane graph has \textit{bounded codegree} if  there is an upper bound on the length of bounded faces. A graph is \textit{weakly non-amenable} if satisfies an isoperimetric inequality of the form $|S|\leq f(|\partial S|)$ for all non-empty finite vertex sets $S$, where $f\colon\N\rightarrow\N$ is a monotone increasing diverging function.

Theorem~\ref{coroll} is an immediate corollary of the following somewhat finer result

\begin{theorem}
\label{The:maintheorem}
Let $G$ be a graph in \pp. Then the following hold:
\begin{enumerate}
\item if $G$ is non-amenable and has bounded codegree then it is hyperbolic;
\item if $G$ is hyperbolic and weakly non-amenable then it has bounded codegree;
\item if $G$ is hyperbolic and weakly non-amenable and in addition has no unbounded face then it non-amenable.
\end{enumerate}
\end{theorem}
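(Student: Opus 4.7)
The statement decomposes into three essentially independent implications; the plan is to treat them in order, reusing part (2) in the proof of part (3).

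For part (1), the plan is to establish a linear Dehn function and then invoke the classical theorem of Gromov/Bowditch that a linear isoperimetric inequality for loops implies Gromov hyperbolicity. Given a cycle $C$ of length $L$, let $S$ be the set of vertices strictly inside the bounded planar region enclosed by $C$. Since $\partial S \subseteq V(C)$, non-amenability gives $|S|\leq c|\partial S|\leq cL$. Using the bounded degree, bounded codegree and Euler's formula on the finite subgraph enclosed by $C$, the number of bounded faces inside $C$ is $O(|S|+L) = O(L)$, the required linear bound. Combined with bounded degrees this gives a linear bound on the combinatorial area of $C$, hence hyperbolicity.

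Part (2) is the main obstacle. I would argue by contradiction: suppose $G$ admits bounded faces of arbitrarily large length, and fix one, $F$, of boundary length $L$ with $L$ very large compared to the hyperbolicity constant $\delta$ and the degree bound. Let $C$ be the boundary cycle of $F$. Hyperbolicity applied to geodesic bigons/triangles with vertices on $C$ produces \emph{short chords}: pairs $u,v\in V(C)$ that are far apart along $C$ but at $G$-distance $O(\delta)$ (or $O(\delta\log L)$). Since $F$ contains no interior vertices or edges of $G$, any geodesic $\gamma$ between $u$ and $v$ must lie in the complement of $F$; together with an arc of $C$ it bounds a finite planar region $R$ whose strictly interior vertices form a set $S$ with $\partial S\subseteq V(\gamma)\cup V(C)$. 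By iteratively applying the chord-cutting procedure and always discarding the side that contains $F$, one constructs sets $S$ whose vertex boundary remains bounded in terms of $\delta$ and the degree bound while their cardinality grows without bound as $L\to\infty$, contradicting weak non-amenability.

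For part (3), by part (2) the graph has bounded codegree, and together with the ``no unbounded face'' hypothesis this means all faces have uniformly bounded length. In this setting I would strengthen weak non-amenability to full non-amenability via a combinatorial Gauss--Bonnet / discrete negative curvature argument. For a finite vertex set $S$, Euler's formula on the finite plane subgraph spanned by $S$ and its boundary relates $|S|$, the number of enclosed faces, and $|\partial S|$; bounded degree and bounded face length make this relation essentially linear, while hyperbolicity (as a local discrete negative curvature statement for the combinatorial curvature at vertices) rules out the ``flat'' configurations in which $|\partial S|/|S|$ could tend to zero. The conclusion is a uniform linear bound $|\partial S|\geq c|S|$.

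The primary difficulty is part (2): extracting from hyperbolicity and planarity a family of finite vertex sets whose cardinality grows while their boundary stays bounded, purely from the existence of one long face. The delicate point is that a short chord alone only produces a set whose boundary includes a long arc of $C$; to actually violate weak non-amenability one must use hyperbolicity to produce a nested family of chords, and exploit that the exterior of $C$ must contain much of $G$, so that the cut-off regions accumulate many vertices.
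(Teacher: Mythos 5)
Your part (1) is essentially the paper's argument (linear bound on the number of faces inside a cycle via non-amenability, then Bowditch's linear-isoperimetric characterization of hyperbolicity); the only point you gloss over is that the planar form of Bowditch's criterion requires minimum degree at least $3$, which the paper arranges by first contracting away ``decorations'' (maximal connected subgraphs with at most two boundary vertices) -- a minor but necessary step.

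Part (2) has a genuine gap, in two places. First, the existence of ``short chords'' -- pairs $u,v$ on the face boundary $C$ that are far apart along $C$ but at $G$-distance $O(\delta)$ -- does \emph{not} follow from applying thin triangles to $C$: the arcs of $C$ are arbitrary paths, not geodesics, so $\delta$-thinness says nothing about them. Second, your construction points the wrong way: if you discard the side containing $F$ and take the vertices \emph{strictly} inside the region between the chord $\gamma$ and the arc $uCv$, that set may well be empty (the chord can hug the arc), and its vertex boundary contains the long arc $uCv$, so it cannot violate weak non-amenability. The vertices you need to count are those of $C$ itself, so you must keep them inside your region and make the enclosing cycle short. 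The paper does exactly this: it defines a \emph{geodetic cycle} (one of the two arcs between any two of its points is a geodesic), proves that in a $\delta$-hyperbolic graph every geodetic cycle has length at most $6\delta$, and then encloses $F$ in a geodetic cycle by iteratively replacing a non-geodesic arc with a ``closest'' geodesic (which exists by a finite-lattice argument and strictly shortens the cycle while keeping $F$ inside). The set $S$ of \emph{all} vertices inside the resulting cycle then satisfies $|\partial S|\leq 6\delta\Delta(G)$ and $|S|\geq|F|$, and weak non-amenability bounds $|F|$. This tightening procedure is the key idea your sketch is missing; without it the ``nested family of chords'' you invoke has no starting point.

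Part (3) as sketched would fail: Gromov hyperbolicity is a large-scale property and does not yield ``local discrete negative curvature'' at vertices, so a combinatorial Gauss--Bonnet argument has nothing to feed on (a hyperbolic graph may contain large combinatorially flat patches). Your sketch also never uses weak non-amenability in the curvature step, yet the implication is false without it. The paper's route is different: Bowditch's criterion gives, for every cycle $C$, a cellulation into at most $k'|C|$ cells of boundary length at most $D$; weak non-amenability bounds the number of vertices inside each cell by $f(D)$, so the number of vertices and faces inside $C$ is $O(|C|)$. Separately, bounded codegree together with ``no unbounded face'' shows that the closed walk bounding the outer face of the subgraph spanned by a connected set $S$ has length at most $\Delta(G^*)\,|\partial S|$. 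Chaining these two estimates (and reducing to connected induced $S$) gives $|\partial S|\geq c|S|$. You should replace the Gauss--Bonnet heuristic with this two-lemma structure.
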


We provide examples showing that none of the conditions featuring in Theorem~\ref{The:maintheorem} can be weakened, 
and that the no accumulation point condition is needed. 

We expect that Theorem~\ref{The:maintheorem} remains true in the class of 1-ended Riemannian surfaces if we replace the bounded codegree condition with the property of having bounded length of boundary components. 
\bigskip

We remark that having bounded degrees is a standard assumption, and assuming bounded codegree is not less natural when the graph is planar.
Part of the motivation behind
Theorem~\ref{The:maintheorem} comes from related recent work of the second author \cite{UKtrans,planarPB}, especially the following
\begin{theorem}[\cite{planarPB}]
\label{The:corollary1.2}
Let $G$ be an infinite, Gromov-hyperbolic, non-amenable, 1-ended, plane graph with bounded degrees and no infinite faces. Then the following five boundaries of $G$ (and the corresponding compactifications of $G$) are
canonically homeomorphic to each other: the hyperbolic boundary, the Martin boundary, the boundary of the square tiling, the Northshield circle, and the boundary $\partial_{\cong}(G)$.
\end{theorem}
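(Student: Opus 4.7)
The plan is to exhibit a single ``universal'' topological circle $S^{1}$ to which each of the five boundaries maps canonically and homeomorphically, so that the five identifications automatically agree. The natural candidate is the circle arising from the He--Schramm circle packing theorem: since $G$ is one-ended, planar, and of bounded degree, after attaching triangular faces (or a suitable augmentation) it admits an essentially unique circle packing in either $\C$ or the open unit disk $\mathbb{D}$, and the non-amenability assumption rules out the parabolic (Euclidean) case, forcing the hyperbolic type. Let $\partial_{cp} G \cong S^{1}$ denote the resulting limit circle.

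First I would identify the Gromov boundary with $\partial_{cp} G$. Using the ring lemma together with bounded degree, the circle packing realisation is a quasi-isometry between $G$ and its nerve sitting inside the Poincar\'e disk; combined with Gromov-hyperbolicity of $G$, a standard $\delta$-thin triangle argument identifies $\partial_{\infty} G$ with the accumulation set $\partial_{cp} G$ canonically. For the Martin boundary I would invoke an Ancona-type result: bounded geometry, hyperbolicity and non-amenability give the required uniform Harnack inequalities along geodesics, so the Martin boundary of the simple random walk coincides canonically with $\partial_{\infty} G$, and therefore with $\partial_{cp} G$.

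For the square tiling boundary, I would use the Benjamini--Schramm square tiling of the transient one-ended planar graph $G$, which tiles a finite cylinder $S^{1}\times[0,h]$. Non-amenability (in fact transience suffices here) ensures that both horizontal boundary circles of this cylinder are non-degenerate, and the combinatorics of the tiling produce a monotone boundary map from each to $\partial_{cp} G$ via the correspondence between square tiles and vertices; a compactness plus density argument promotes this to a homeomorphism. The Northshield circle, which is constructed from discrete extremal length / rhombic embedding data, can be compared to the circle packing by the uniqueness of the hyperbolic-type conformal structure on $G$ (in the spirit of the discrete uniformisation theorem). Finally, the boundary $\partial_{\cong}(G)$, defined through equivalence classes of geodesic rays or end-like sequences, is shown to agree with $\partial_{\infty} G$ by the standard bijection between visual and combinatorial rays in a hyperbolic graph.

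The hardest step I expect is the square tiling identification: a priori the tiling provides two boundary circles whereas circle packing provides one, so one must show that both naturally correspond to the same $\partial_{cp} G$ in a way compatible with the vertex-to-tile correspondence. This requires careful use of one-endedness (to rule out a non-trivial second boundary component arising from recurrence on one side) and of non-amenability (to ensure the harmonic measures on both circles are mutually absolutely continuous with respect to the hyperbolic visual measure). Once this is in place, combining the five canonical homeomorphisms into $S^{1}$ yields the required canonical homeomorphisms among all five boundaries and hence among the corresponding compactifications.
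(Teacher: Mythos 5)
This statement is not proved in the present paper at all: it is quoted verbatim from \cite{planarPB} as background motivation, so there is no ``paper's own proof'' to compare against here. Your text is therefore being judged as a standalone argument, and as such it is a programme rather than a proof. The overall strategy --- funnel all five boundaries through one canonical circle --- is reasonable in spirit, but every one of the five identifications you describe is itself a substantial theorem that you invoke rather than establish. Most seriously: the ring lemma and bounded degree do \emph{not} make the circle packing a quasi-isometry onto its image in the Poincar\'e disk (the hyperbolic distance between tangent circles near the unit circle need not be uniformly bounded below or above without further geometric control), so the identification of the Gromov boundary with the circle-packing limit set does not follow from ``a standard $\delta$-thin triangle argument''; and the Ancona-type identification of the Martin boundary with the hyperbolic boundary is precisely the kind of hard analytic input that \cite{planarPB} has to prove for this class of graphs --- it is known for hyperbolic groups, but there is no off-the-shelf version for arbitrary bounded-degree, non-amenable, hyperbolic plane graphs that you can simply cite.

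There are also factual mismatches with the objects in the statement. The Benjamini--Schramm square tiling of a transient plane graph tiles a cylinder whose \emph{bottom} circle corresponds to the root (and is essentially degenerate), so the ``two non-degenerate boundary circles'' problem you flag is not the actual difficulty; the Northshield circle is built from harmonic functions and flows, not from ``discrete extremal length / rhombic embedding data''; and $\partial_{\cong}(G)$ is a specific boundary defined in \cite{planarPB}, which you have guessed to be the set of geodesic rays --- if that were its definition it would be the hyperbolic boundary by fiat and there would be nothing to prove. In short, the proposal identifies a plausible skeleton but leaves all of the load-bearing steps as citations to results that either do not exist in the generality needed or are exactly the content of the theorem being proved.
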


In order to show the independence of the hypotheses in this, the second author provided a counterexample to a conjecture of Northshield \cite{NorCir} asking whether a plane, accumulation-free, non-amenable graph with bounded vertex degrees must be hyperbolic. That counterexample had unbounded codegree, and so the question came up of whether Northshield's conjecture would be true subject to the additional condition of bounded codegree. The first part of Theorem~\ref{The:maintheorem} says that this is indeed the case.

A related problem from \cite{planarPB} asks whether there is a planar, hyperbolic graph with bounded degrees, no unbounded faces, and the Liouville property. Combined with a result of \cite{UKtrans} showing that the Liouville property implies amenability in this context, the third part of Theorem~\ref{The:maintheorem} implies that such a graph would need to have accumulation points or satisfy no isoperimetric inequality. (Note that such a graph could have bounded codegree.)

\medskip
The aforementioned example from \cite{planarPB} shows that non-amenability implies neither hyperbolicity nor bounded codegree, and is one of the examples needed to show that no one of the four properties implies any of the other in $\pp$ (with the exception of non-amenability implying weak non-amenability). We now describe other examples showing the independence of those properties.

To prove that bounded codegree does not imply hyperbolicity or that weak non-amenability does not imply non-amenability it suffices to consider the square grid $\Z^2$.


To prove that hyperbolicity does not imply weak non-amenability nor bounded codegree, we adopt an example suggested by B.~Bowditch (personal communication). Start with a hyperbolic graph $G$ of bounded codegree $\Delta(G^*)$ and perform the following construction on any infinite sequence $\{F_n\}$ of faces of $G$. Enumerate the vertices of $F_n$ as $f_1, \ldots f_k$ in the order they appear along $F_n$ starting with an arbitrary vertex.
Add a new vertex $v_n$ inside $F_n$, and join it to each $f_i$ by a path $P_i$ of length $n$, so that the $P_i$ meet only at $v_n$. 
Then for every $1\leq i < k$, and every $j<n$, join the $j$th vertices of $P_i$ and $P_{i+1}$ with an edge. 
Call $G_1$ the resulting graph. Then $G_1$ has unbounded codegree, because $P_1, P_k$ and one of the edges of $F_n$ bound a face of length $2n-1$. Moreover $G_1$ is not weakly non-amenable: the set of vertices inside $F_n$ is unbounded in $n$, while its boundary has $|F_n|\leq\Delta(G^*)$ vertices. Finally, $G_1$ is hyperbolic: it is quasi-isometric to the graph obtained from $G$ by attaching a path $R$ of length $n$ to each $F_n$. 

To prove that bounded codegree does not imply weak non-amenability, consider the graph $G_2$ obtained from the same construction as above except that we now also introduce   edges between $P_k$ and $P_1$: now $G_2$ has bounded codegree while still not being weakly non-amenable.\footnote{B.~Bowditch (personal communication) noticed that $G_1$ is quasi-isometric to $G_2$, showing that having bounded codegree is not a quasi-isometric invariant in \pp, although he proved that having bounded codegree is a quasi-isometric invariant among weakly non-amenable graphs.}

To prove that hyperbolicity and weak non-amenability together do not imply non-amenability without the condition of no unbounded face consider the following example. Let $H$ be the hyperbolic graph constructed as follows. The vertex set of $H$ is the subset of $\R^2$ given by $\{(\frac{i}{2^n},n)\mid i\in\Z,n\in\N\}$. Join two vertices $(\frac{i}{2^n},n),(\frac{j}{2^m},m)$ with an edge whenever either $n=m$ and $i=j+1$ or $n=m+1$ and $i=2j$. The finite graph $H(a)$ is the subgraph of $H$ induced by those vertices contained inside the square with corners $(0,0),(a,0),(a,0),(a,a)$. We construct the graph $G$ by attaching certain $H(n)$ to $H$ as follows. For every $n\in \N$, attach a copy of $H(n)$ along the path $\{(n^2,0),\ldots,(n^2+n,0)\}$ of $H$ by identifying the vertex $(n^2+k,0)$ of $H$ with the vertex $(k,0)$ of $H(n)$. Note that the resulting graph $G$ is planar because $n^2+n< (n+1)^2$, and so the $H(n)$ we attached to $H$ do not overlap. It is easy to prove that $G$ is amenable and weakly non-amenable. It is also not hard to see that $G$ is hyperbolic, for example by noticing that the ray $\{(x,0),x\in\Z\}\subset G$ contains the only geodesic between any two of its vertices, and recalling that a graph  $G$ is hyperbolic, if any two geodesics of $G$ are either at bounded distance or diverge exponentially \cite{hypgroupnotes}.

To see that Theorem~\ref{The:maintheorem} becomes false if we allow accumulation points of vertices, consider the free product of the square grid $\Z^2$ with the line $\Z$; this graph can be embedded with bounded codegree, and it is non-amenable but not hyperbolic.

\section{Definitions} \label{defs}
The \textit{degree} $\deg(v)$ of a vertex $v$ in a graph $G$ is the number of edges incident with $v$; if
\[\Delta(G):=\sup_{v\in V(G)}\deg(v)\]
is finite we will say that $G$ has \textit{bounded degree}.
 
An \textit{embedding} of a graph $G$ in the plane will always mean a topological embedding of the corresponding 1-complex in the euclidean plane $\R^2$; in simpler words, an embedding is a drawing in the plane with no two edges crossing. A \textit{plane graph} is a graph endowed with a fixed embedding. A plane graph is \textit{accumulation-free} if its set of vertices has no accumulation point in the plane.

A \textit{face} of an embedding $\sigma: G \rightarrow \R^2$ is a component of $\R^2 \setminus \sigma(G)$. The \textit{boundary} of a face $F$ is the set of vertices and edges of $G$ that are mapped by $\sigma$ to the closure of $F$. The \textit{length} $|F|$ of $F$ is the number of edges in its boundary. A face $F$ is \textit{bounded} if the length $|F|$ is finite. If
\[\Delta(G^*):=\sup_{F\text{ bounded face of } G}|F|\]
is finite we will say that $G$ has \textit{bounded codegree}.

The \textit{Cheeger constant} of a graph $G$ is
\[c(G):=\inf_{\emptyset\neq S\subset G\text{ finite}}\dfrac{|\partial S|}{|S|},\]
where $\partial S=\{v\in G\setminus S\mid\text{there exists }w\in S\text{ adjacent to }v\}$ is the \textit{boundary} of $S$. Graphs with strictly positive Cheeger constant are called \textit{non-amenable} graphs. A graph is \textit{weakly non-amenable} if satisfies an isoperimetric inequality of the form $|S|\leq f(|\partial S|)$ for all non-empty finite vertex sets $S$, where $f\colon\N\rightarrow\N$ is a monotone increasing diverging function.

A $x$-$y$ path in a graph $G$ is called a \textit{geodesic} if its length coincides with the distance between $x$ and $y$. A \textit{geodetic triangle} consists of three vertices $x,y,z$ and three geodesics, called its \textit{sides}, joining them. A geodetic triangle is $\delta$-\textit{thin} if each of its sides is contained in the $\delta$-neighbourhood of the union of the other two sides. A connected graph is $\delta$-\textit{hyperbolic} if each geodetic triangle is $\delta$-thin. The smallest such $\delta\geq0$ will be called the \textit{hyperbolicity constant} of $G$. A graph is \textit{hyperbolic} if there exists a $\delta\geq0$ such that each connected component is $\delta$-hyperbolic.

If $C$ is a cycle of $G$ and $x,y$ lie on $C$ then they identify two arcs joining them along $C$: we will write $xCy$ and $yCx$ for these paths. Similarly, if $P$ is a path passing through these vertices, $xPy$ is the sub-path of $P$ joining them.

\section{Hyperbolicity and weak non-amenability imply bounded codegree}
In this and the following sections we will prove each of the three implications of Theorem~\ref{The:maintheorem} separately.

We will assume throughout the text that $G\in \pp$, i.e. $G$ is a accumulation-free plane graph with bounded degrees, fixed for the rest of this paper. Theorem~\ref{The:maintheorem} is trivial in the case of forests, so from now on we will assume that $G$ has at least a cycle, or in other words it has a bounded face.

A \textit{geodetic cycle} $C$ in a graph $G$ is a cycle with the property that for every two points $x,y\in C$ at least one of $xCy$ and $yCx$ (defined in the end of Section~\ref{defs}) is a geodesic in $G$.

\begin{lemma}
If $G\in \pp$ is hyperbolic, then the lengths of its geodetic cycles are bounded, i.e.
\[\sup_{C\text{ geodetic cycle of }G}|C|<\infty.\]
\label{Lem:boundedgeocycle}
\end{lemma}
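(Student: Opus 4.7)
The plan is to observe first that a geodetic cycle $C$ is isometrically embedded in $G$. Indeed, for any $x,y\in C$, the hypothesis gives that one of the arcs $xCy$ and $yCx$ is a geodesic of $G$, hence equals $d_G(x,y)$ in length; since the other arc is also an $x$-$y$ walk, it is at least as long. Thus
\[
d_G(x,y)\;=\;\min\bigl(|xCy|,\,|yCx|\bigr),
\]
i.e.\ the $G$-distance between two points on $C$ equals the length of the shorter arc of $C$ joining them.

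Set $L:=|C|$ and choose three vertices $x,y,z\in C$ splitting $C$ into three arcs $\alpha=xCy$, $\beta=yCz$, $\gamma=zCx$ whose lengths are as close to $L/3$ as possible, so that each lies between $\lfloor L/3\rfloor$ and $\lceil L/3\rceil$, and in particular is at most $L/2$. By the isometric property each of these arcs is the shorter arc between its endpoints, and is therefore a geodesic in $G$; together they form a geodesic triangle with vertices $x,y,z$.

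Now invoke $\delta$-hyperbolicity. Let $m$ be a vertex of $\alpha$ at cyclic distance $\lfloor|\alpha|/2\rfloor$ from $x$. A direct case analysis on $C$ shows that for every vertex $p\in\beta\cup\gamma$ the shorter $C$-arc from $m$ to $p$ has length at least $\lfloor|\alpha|/2\rfloor$, with equality attained at $p=x$. By the isometric property, $d_G(m,p)\ge\lfloor|\alpha|/2\rfloor$. Since the triangle is $\delta$-thin, $m$ lies within distance $\delta$ of $\beta\cup\gamma$, so $\lfloor|\alpha|/2\rfloor\le\delta$, whence $|\alpha|\le 2\delta+1$. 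Combined with $|\alpha|\ge\lfloor L/3\rfloor\ge L/3-1$ this yields $L\le 6\delta+6$, a bound independent of the cycle $C$.

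There is no genuine obstacle here: the argument is clean once the isometric embedding statement is in hand, and the only point requiring a little bookkeeping is the case analysis for the distance from the midpoint $m$ to the opposite sides of the triangle, which is routine. Note that planarity plays no role; the same proof gives the stronger fact that in any $\delta$-hyperbolic graph the lengths of geodetic cycles are bounded by a linear function of $\delta$.
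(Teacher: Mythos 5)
Your proposal is correct and follows essentially the same route as the paper: both observe that the arcs between three (roughly) equally spaced points on a geodetic cycle are geodesics forming a geodesic triangle, and then derive a contradiction with $\delta$-thinness by showing the midpoint of one arc is at distance roughly $|C|/6$ from the other two arcs. The only differences are cosmetic (your constant is $6\delta+6$ versus the paper's $6\delta$), and your closing remark that planarity is not used is accurate.
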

\begin{proof}
Let $\delta$ be the hyperbolicity constant of $G$. We will show that no geodetic cycle has more than $6\delta$ vertices.

Let $C$ be a geodetic cycle, say with $n$ vertices, and choose three points $a,b,c$ on $C$ as equally spaced as possible, i.e. every pair is at least $\Big\lfloor\dfrac{n}{3}\Big\rfloor$ apart along $C$. Let $ab$ be the arc of $C$ joining $a$ and $b$ that does not contain $c$, and define $bc$ and $ca$ similarly. We want to show that $ab,bc$ and $ca$ form a geodetic triangle.

If $x,y,z$ are distinct points in $C$ then let $xzy$ be the arc in $C$ from $x$ to $y$ that passes through $z$. Then we know that one of $ab,acb$ is a geodesic joining $a$ and $b$, and $|acb|\geq2\Big\lfloor\dfrac{n}{3}\Big\rfloor>|ab|$, so $ab$ is a geodetic arc. Similarly, $bc$ and $ca$ are geodetic arcs. 

Consider now the point $p$ on $ab$ at distance $\Big\lfloor\dfrac{n}{6}\Big\rfloor$ from $a$ along $C$. Since $G$ is a $\delta$-hyperbolic graph, we know that there is a vertex $q$ on $bc$ or $ca$ which is at distance at most $\delta$ from $p$. But as $C$ is a geodetic cycle, the choice of $a,b,c$ implies that
\[d(p,q)\geq\min\{d(p,a),d(p,b)\}=\Big\lfloor\dfrac{n}{6}\Big\rfloor,\]
from which we deduce that $n\leq 6\delta$.
\end{proof}

By the Jordan curve theorem, we can say that a point of $\R^2$ is \textit{strictly inside} a given cycle $C$ of $G$ if it belongs to the bounded component of $\R^2\setminus C$ and is \textit{inside} $C$ if it belongs to $C$ or is strictly inside. We say that a subset of $\R^2$ is inside (resp. strictly inside) $C$ if all of its points are inside (resp. strictly inside) $C$. A subset of $\R^2$ is \textit{outside} $C$ if it is not strictly inside $C$.

Recall that we are assuming $G$ to have no accumulation point, so inside each cycle we can only have finitely many vertices.

\begin{corollary}
\label{Cor:geocyclesuffices}
Suppose $G\in \pp$ is hyperbolic and weakly non-amenable. If every face of $G$ is contained inside a geodesic cycle, then $\Delta(G^*)<\infty$.
\end{corollary}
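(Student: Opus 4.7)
The plan is to combine Lemma~\ref{Lem:boundedgeocycle}, the no-accumulation assumption, weak non-amenability, and bounded degree to produce a uniform bound on $|F|$ for every bounded face $F$.

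First, I fix a bounded face $F$ and apply the hypothesis to obtain a geodetic (``geodesic'') cycle $C$ containing $F$ inside. By Lemma~\ref{Lem:boundedgeocycle}, $|C|\le 6\delta$, where $\delta$ is the hyperbolicity constant of $G$. Since $C$ is a simple closed curve in $\R^2$ and $F$ is a connected component of $\R^2\setminus\sigma(G)$ disjoint from $C$, $F$ lies in the bounded component of $\R^2\setminus C$; equivalently, $F$ is strictly inside $C$. Consequently, the vertices and edges on the boundary of $F$ all lie inside $C$ (in the closed disc bounded by $C$).

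Let $S$ denote the set of vertices of $G$ strictly inside $C$. Any vertex of $G\setminus S$ adjacent to a vertex of $S$ must be reached by an edge crossing the Jordan curve $C$, which in a planar embedding forces that vertex to lie on $C$. Hence $\partial S\subseteq V(C)$, so $|\partial S|\le|C|\le 6\delta$. If $S$ is nonempty, weak non-amenability yields
\[
|S|\le f(|\partial S|)\le f(6\delta),
\]
where $f$ is the isoperimetric function of $G$; if $S$ is empty the bound is trivial. In either case the total number of vertices inside $C$ is at most $|V(C)|+|S|\le 6\delta+f(6\delta)$.

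Finally, every edge on the boundary of $F$ is incident to vertices lying inside $C$, so
\[
2|F|\le\sum_{v\text{ inside }C}\deg(v)\le \Delta(G)\bigl(6\delta+f(6\delta)\bigr).
\]
Since the right-hand side is independent of $F$, we conclude that $\Delta(G^*)<\infty$. The only delicate point is the planar-topological step identifying $\partial S$ with a subset of $V(C)$; once that is in place, the argument is a direct combination of the bounded-length-of-geodetic-cycles lemma, the isoperimetric inequality, and bounded degree.
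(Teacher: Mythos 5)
Your proof is correct and follows essentially the same route as the paper's: bound the geodetic cycle by Lemma~\ref{Lem:boundedgeocycle}, use planarity to control the boundary of the vertex set enclosed by $C$, apply the isoperimetric function, and finish with the bounded-degree count. The only (harmless) differences are that you take $S$ to be the vertices \emph{strictly} inside $C$ (so $\partial S\subseteq V(C)$) where the paper takes all vertices inside $C$, and your final edge-count $2|F|\le\sum_{v}\deg(v)$ is in fact slightly more careful than the paper's direct inequality $|F|\le|S|$.
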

\begin{proof}
Consider a face $F$ contained inside a geodetic cycle $C$; by Lemma~\ref{Lem:boundedgeocycle} we know that $|C|\leq 6\delta$, where $\delta$ is the hyperbolicity constant of $G$. Let $S$ be the set of all vertices inside the geodetic cycle $C$ so $|S|<\infty$ as there is no accumulation point. Then the vertices of $S$ sending edges to the boundary $\partial S$ belong to $C$ and each vertex of $C$ sends less than $\Delta(G)$ edges to $\partial S$, implying that $|\partial S|<\Delta(G)|C|$. Let $f\colon\N\rightarrow\N$ be a monotone increasing diverging function witnessing the weak non-amenability of $G$. Then, since $F\subseteq S$,
\[|F|\leq|S|\leq f(|\partial S|)\leq f(6\delta\Delta(G)),\]
which is uniformly bounded for every face $F$ of $G$.
\end{proof}
In what follows we will exhibit a construction showing that in any graph each face is contained inside a geodetic cycle, which allows us to apply Corollary~\ref{Cor:geocyclesuffices} whenever the graph is hyperbolic and weakly non-amenable.

We remarked above that by the Jordan curve theorem we can make sense of the notion of being contained inside a cycle. Similarly, given three paths $A,B,C$ sharing the same endpoints, if $A\cup C$ is a cycle and $B$ lies inside it, we will say that $B$ is \textit{between} $A,C$.

Now, given a cycle $C$ and two points $x,y\in C$, consider the set $\mathcal{S}=\mathcal{S}(x,y)$ of $x$-$y$ geodesics that lie outside $C$. This set can be divided into two classes:
\begin{gather*}
\mathcal{S}_1:=\{\Gamma\in\mathcal{S}\mid xCy\text{ is between }yCx,\Gamma\},\\
\mathcal{S}_2:=\{\Gamma\in\mathcal{S}\mid yCx\text{ is between }xCy,\Gamma\}.
\end{gather*}

For the proof of Theorem~\ref{The:maintheorem}, we will make use of the notion of `the closest geodesic' to a given cycle; let us make this more precise. Consider a cycle $C$ in a plane graph, two points $x$ and $y$ on $C$ and a choice of an arc on $C$ joining them, say $xCy$.
Let us define a partial order on the set $\mathcal{S}_1$ defined above: for any two geodesics $\Gamma,\Gamma'\in\mathcal{S}_1$ we declare $\Gamma\preceq \Gamma'$ if $\Gamma$ is between $xCy,\Gamma'$.

\begin{lemma}
With notation as above, $(\mathcal{S}_1,\preceq)$ has a least element.
\label{Lem:Shasminimum}
\end{lemma}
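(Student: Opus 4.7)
My plan is to combine a finiteness observation with a meet-like construction on pairs of geodesics in $\mathcal{S}_1$. Assuming $\mathcal{S}_1 \neq \emptyset$ (otherwise the claim is vacuous), fix any $\Gamma_0 \in \mathcal{S}_1$ and consider the down-set $P_0 := \{\Gamma \in \mathcal{S}_1 : \Gamma \preceq \Gamma_0\}$. The cycle $xCy \cup \Gamma_0$ bounds a compact planar region which, by the no-accumulation-point hypothesis on $G$, contains only finitely many vertices; any $\Gamma \in P_0$ uses only vertices of this finite set, so $P_0$ is finite and hence admits a minimal element $\Gamma^*$. It remains to show that $\Gamma^*$ is in fact the \emph{least} element of $\mathcal{S}_1$.

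For this I would establish a meet operation on $\mathcal{S}_1$: given $\Gamma_1, \Gamma_2 \in \mathcal{S}_1$, produce a geodesic $\Gamma_1 \wedge \Gamma_2 \in \mathcal{S}_1$ with $\Gamma_1 \wedge \Gamma_2 \preceq \Gamma_1$ and $\Gamma_1 \wedge \Gamma_2 \preceq \Gamma_2$. Since $G$ is a plane graph (no edge crossings), $\Gamma_1$ and $\Gamma_2$ meet only at common vertices $x = p_0, p_1, \ldots, p_k = y$. Because sub-arcs of geodesics are geodesics, the position of any common vertex $p$ along $\Gamma_i$ depends only on $d(x, p)$, so the $p_j$'s occur in the same order along $\Gamma_1$ and $\Gamma_2$, and the two sub-arcs between consecutive $p_j, p_{j+1}$ share the common length $d(p_j, p_{j+1})$. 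Each such pair of sub-arcs bounds a ``lens'' disk lying outside $C$; within the lens, one of the two sub-arcs lies between $xCy$ and the other. Concatenating these ``inner'' sub-arcs yields a simple $x$-$y$ path $\Gamma$ of total length $d(x, y)$, hence a geodesic, which by construction lies between $xCy$ and $\Gamma_i$ for both $i$, so $\Gamma \in \mathcal{S}_1$ and $\Gamma \preceq \Gamma_1, \Gamma_2$.

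Given the meet, the conclusion is formal: for any $\Gamma' \in \mathcal{S}_1$, the meet $\Gamma^* \wedge \Gamma'$ lies in $P_0$ (it is $\preceq \Gamma^* \preceq \Gamma_0$), so minimality of $\Gamma^*$ forces $\Gamma^* \wedge \Gamma' = \Gamma^*$, equivalently $\Gamma^* \preceq \Gamma'$. The main technical obstacle is the meet construction itself: planarity has to be used once to force the shared vertices to appear in the same order along the two geodesics, and again to make the phrase ``closer to $xCy$'' unambiguous inside each lens, so that the resulting concatenation is a genuine simple path, stays outside $C$, and genuinely belongs to $\mathcal{S}_1$.
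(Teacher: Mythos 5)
Your proposal is correct and follows essentially the same route as the paper: finiteness of the relevant set of geodesics plus a pairwise meet obtained by splicing together, between consecutive common vertices, the sub-arc lying closer to $xCy$ (the paper phrases this as alternating the maximal subpaths of each geodesic that lie inside the cycle formed by $xCy$ and the other geodesic). The only cosmetic difference is that you derive finiteness of the down-set from the no-accumulation-point hypothesis, while the paper notes that all of $\mathcal{S}_1$ is finite because every $x$-$y$ geodesic lies in the finite ball of radius $d(x,y)$ around $x$.
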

\begin{proof}
The set $\mathcal{S}_1$ is a subset of all paths from $x$ to $y$ of length $d(x,y)$. These paths are contained in the ball of center $x$ and radius $d(x,y)$. As $G$ is locally finite, this ball is finite and so is $\mathcal{S}_1$. Therefore, it suffices to produce for every couple of elements a (greatest) lower bound.\footnote{In a very similar way we can produce a least upper bound, showing that $\mathcal{S}_1$ is a finite lattice.}

Pick two geodesics $\Gamma,\Gamma'$ in $\mathcal{S}_1$; let $P_1,\ldots,P_h$ be the collection (ordered from $x$ to $y$) of maximal subpaths of $\Gamma$ lying inside the cycle $xCy\cup\Gamma'$ and $Q_1,\ldots,Q_k$ the collection (ordered from $x$ to $y$) of maximal subpaths of $\Gamma'$ lying inside the cycle $xCy\cup\Gamma$ (note that $h-k\in\{-1,0,1\}$). Without loss of generality, we can assume that $x$ belongs to $P_1$, so $h-k\in\{0,1\}$.

Now consider the subgraph
\[\Gamma'':=\begin{cases}
P_1\cup Q_1\ldots\cup P_h\cup Q_k, & \text{if }h=k;\\
P_1\cup Q_1\ldots\cup P_{h-1}\cup Q_k\cup P_h, & \text{if }h=k+1.
\end{cases}\]
Note that each $P_i$ shares one endvertex with $Q_{i-1}$ and the other with $Q_i$, and the same holds for each $Q_j$. We want to prove $\Gamma''$ to be an element of $\mathcal{S}_1$ and specifically the greatest lower bound of $\Gamma$ and $\Gamma'$.\footnote{The differences between the two cases in the definition of $\Gamma''$ are actually irrelevant.}

Note that $\Gamma$ and $\Gamma'$ intersect in some points $x=x_1,x_2,\ldots,x_n=y$ (the endvertices of all $P_i$ and $Q_j$) and, being geodesics, $x_i\Gamma x_{i+1}$ is as long as $x_i\Gamma'x_{i+1}$. This implies $|\Gamma''|=|\Gamma'|=|\Gamma|=d(x,y)$, i.e. $\Gamma''$ is a geodesic (in particular, it is a path). The fact that $\Gamma''\in\mathcal{S}_1$ follows from having put together only sub-paths of elements from $\mathcal{S}_1$. Lastly, we need to show that both $\Gamma''\preceq \Gamma$ and $\Gamma''\preceq \Gamma'$ hold. But all paths $P_i$ and $Q_j$ are inside both $xCy\cup \Gamma$ and $xCy\cup \Gamma'$, therefore so is $\Gamma''$.
\end{proof}

Let us say that in a plane graph a path $P$ \textit{crosses} a cycle $C$ if the endpoints of $P$ are outside $C$ but there is at least one edge $e$ of $P$ such that the interior of the curve in $\R^2$ representing $e$ lies strictly inside $C$.

\begin{corollary}
Consider a cycle $C$ in a plane graph $G$ and two points $x$ and $y$ on $C$. Then there exists a $x$-$y$ geodesic $\Gamma$ of $G$ satisfying the following:
\begin{itemize}
\item[(1)] $xCy$ is between $yCx,\Gamma$;
\item[(2)] there is no geodesic outside $C$ crossing the cycle $xCy\cup\Gamma$.
\end{itemize}
\label{Cor:closestgeodesic}
\end{corollary}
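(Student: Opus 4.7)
My plan is to define $\Gamma$ to be the least element of $(\mathcal{S}_{1},\preceq)$ provided by Lemma~\ref{Lem:Shasminimum}; then (1) is immediate from $\Gamma\in\mathcal{S}_{1}$.

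For (2) I argue by contradiction: suppose some geodesic $\Gamma^{*}$ outside $C$ crosses $D:=xCy\cup\Gamma$. After restricting $\Gamma^{*}$ to a suitable subpath, I may assume $\Gamma^{*}=p\Gamma^{*}q$ is itself a geodesic with endpoints $p,q\in D$, with relative interior in the bounded region $R_{D}$ of $\R^{2}\setminus D$, and with at least one edge strictly inside $D$. In particular $|\Gamma^{*}|=d(p,q)$.

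The heart of the argument is the case $p,q\in\Gamma$: assuming without loss of generality that $p$ precedes $q$ along $\Gamma$ oriented from $x$ to $y$, I set
\[\Gamma':=x\Gamma p\ \cup\ \Gamma^{*}\ \cup\ q\Gamma y.\]
Then $|p\Gamma q|=d(p,q)=|\Gamma^{*}|$ gives $|\Gamma'|=|\Gamma|=d(x,y)$, so $\Gamma'$ is an $x$-$y$ geodesic. Being a union of pieces of $\Gamma$ and $\Gamma^{*}$ (both outside $C$) and lying inside $D$, it belongs to $\mathcal{S}_{1}$ and satisfies $\Gamma'\preceq\Gamma$; and since some edge of $\Gamma^{*}$ lies strictly inside $D$ while $\Gamma\subseteq D$, we have $\Gamma'\neq\Gamma$, so $\Gamma'\prec\Gamma$, contradicting the minimality of $\Gamma$.

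The remaining cases, in which $p$ or $q$ lies on $xCy$ rather than on $\Gamma$, are where I expect the main obstacle, since the direct surgery above no longer applies. The plan is to refine the choice of the crossing subpath: by following $\Gamma^{*}$ and recording the successive points at which it meets $D$, one typically extracts a subpath whose endpoints both fall on $\Gamma$, reducing to the main case. If instead every incursion of $\Gamma^{*}$ into $R_{D}$ enters and leaves through $xCy$, one must construct an alternative $x$-$y$ geodesic that starts along $xCy$, uses $\Gamma^{*}$ as a shortcut across $R_{D}$, and finishes along $\Gamma$, and verify that this walk has length exactly $d(x,y)$; this last verification is the delicate step, as it is what produces a geodesic in $\mathcal{S}_{1}$ strictly closer to $xCy$ than $\Gamma$ and thereby contradicts minimality once more.
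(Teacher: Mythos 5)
Your choice of $\Gamma$ and your treatment of the case $p,q\in\Gamma$ coincide with the paper's argument: the paper also takes the least element of $(\mathcal{S}_1,\preceq)$, extracts from the crossing geodesic the maximal subpath containing the offending edge and lying inside $xCy\cup\Gamma$, and splices it into $\Gamma$ to contradict minimality. So the core of your proof is correct and is the intended one.

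The gap is exactly where you flag it, and your proposed fallback would not close it. A walk that enters the region $R_D$ through a point of $xCy$, shortcuts across via $\Gamma^{*}$, and then continues along $\Gamma$ must begin with a portion of $xCy$; since $xCy$ is an arbitrary arc of $C$ and need not be geodesic, there is no reason for such a walk to have length $d(x,y)$, so it does not yield an element of $\mathcal{S}_1$ and cannot contradict minimality. What you actually need to show is that this case does not arise. Since $xCy$ is between $yCx$ and $\Gamma$, the arcs $xCy$, $yCx$, $\Gamma$ form a theta-shape whose three regions are the interior of $C$, the ``lens'' bounded by $xCy\cup\Gamma$ (which is $R_D$), and the unbounded region; in particular the two regions adjacent to the interior of the arc $xCy$ are the lens and the interior of $C$. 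A geodesic lying outside $C$ has no edge strictly inside $C$, so it cannot pass between the lens and the complement of $xCy\cup\Gamma$ through an interior point of $xCy$: the only way in or out of the lens for such a path is through $\Gamma$ (the points $x$ and $y$ lie on $\Gamma$, so they cause no trouble). Hence every maximal incursion of $\Gamma^{*}$ into the closed lens containing an edge strictly inside $xCy\cup\Gamma$ has both endpoints on $\Gamma$, your main case applies verbatim, and the ``delicate'' alternative is vacuous. Adding this separation argument makes your proof complete and essentially identical to the paper's.
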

\begin{proof}
Note that the condition (1) is exactly the definition of the set $\mathcal{S}_1$ given above so by Lemma~\ref{Lem:Shasminimum}, there exists a least $x$-$y$ geodesic $\Gamma$ with respect to $\preceq$. Let us show that this is the required geodesic. Suppose there is a geodesic $\Gamma'$ that crosses the cycle $xCy\cup\Gamma$ and lies outside $C$, so the endpoints of $\Gamma'$ are outside $yCx\cup\Gamma$ and $\Gamma'$ has an edge $e$ strictly inside $xCy\cup \Gamma$. Let $a\Gamma'b$ the longest subpath of $\Gamma'$ containing $e$ and lying inside $xCy\cup\Gamma$. Then $a,b$ are on $C$ and the geodesic
\[\Gamma'':=x\Gamma a\cup a\Gamma'b\cup b\Gamma y\]
satisfies $\Gamma''\prec\Gamma$, contradicting the minimality of $\Gamma$. This contradiction proves our claim.
\end{proof}
Note that the Corollary does not claim uniqueness for the geodesic: if $\Gamma$ satisfies the claim and $\Gamma'\preceq\Gamma$ then $\Gamma'$ satisfies it as well. However, in the proof we showed that the unique least element of $\mathcal{S}_1$ satisfies the claim: such a geodesic will be referred to as the \textit{closest} geodesic to the cycle $C$ in $\mathcal{S}_1$. We conclude that, given a pair of points $x,y$ on a cycle $C$, there are exactly two $x$-$y$ geodesics closest to $C$: one for each of $\mathcal{S}_1,\mathcal{S}_2$. These two geodesics can intersect, but cannot cross each other.

\begin{theorem}
\label{The:boundedcodegree}
If $G\in \pp$ is hyperbolic and weakly non-amenable then $\Delta(G^*)<\infty$.
\end{theorem}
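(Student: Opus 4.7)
The plan is to reduce the theorem to Corollary~\ref{Cor:geocyclesuffices} by proving the general fact that for every $G \in \pp$, each bounded face of $G$ is contained inside some geodetic cycle. Fix a bounded face $F$ and consider the family of cycles of $G$ having $F$ inside; this family is non-empty and its elements have positive integer length, so a minimum-length element $C$ exists. I claim any such minimising $C$ is geodetic.

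Suppose for contradiction that $C$ is not geodetic; then there exist $x, y \in C$ with $d(x, y) < \min(|xCy|, |yCx|)$. If some $x$-$y$ geodesic lies outside $C$ then $\mathcal{S}_1 \cup \mathcal{S}_2 \neq \emptyset$, and Corollary~\ref{Cor:closestgeodesic} yields a closest such geodesic $\Gamma$, say in $\mathcal{S}_1$. The cycle $\Gamma \cup yCx$ still contains $F$ inside---its interior is the interior of $C$ enlarged by the region between $xCy$ and $\Gamma$---and has length $d(x, y) + |yCx| < |C|$, contradicting minimality. If instead some $x$-$y$ geodesic $\sigma$ lies inside $C$, then by the Jordan curve theorem $\sigma$ splits the interior of $C$ into two regions; because $F$ is a face of $G$ it lies entirely in one of them, bounded by $\sigma$ together with one arc $A$ of $C$. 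The cycle $\sigma \cup A$ contains $F$ inside and has length $d(x, y) + |A| < |A| + |C \setminus A| = |C|$, again contradicting minimality.

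The remaining possibility is that every $x$-$y$ geodesic crosses $C$. I handle this by extracting, from such a geodesic $\sigma$, a maximal sub-path $\sigma'$ whose interior lies strictly inside $C$ with endpoints $v, w \in C$; the sub-path $\sigma'$ is itself a $v$-$w$ geodesic in $G$, and applying the inside argument to $(v, w, \sigma')$ yields a cycle $\sigma' \cup A$ with $F$ inside, where $A$ is an arc of $C$ between $v$ and $w$. The main technical obstacle is that $(v, w)$ need not be a non-geodetic pair on $C$, so this cycle may only satisfy $|\sigma' \cup A| \leq |C|$. To overcome this, one must exploit the non-geodeticity of the original pair $(x, y)$ and carry out a planar bookkeeping across all inside and outside sub-paths of $\sigma$, showing that the total shortfall $|xCy| + |yCx| - 2 d(x, y) > 0$ is realised as strict shortening at some inside sub-path (equivalently, that the $v$-$w$ sub-pair for a judiciously chosen maximal inside sub-path of $\sigma$ does allow strict decrease). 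Once the claim is proved, Corollary~\ref{Cor:geocyclesuffices} completes the proof.
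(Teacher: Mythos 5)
Your strategy differs from the paper's: you take a \emph{global minimum} over all cycles containing $F$ and argue that a minimiser must be geodetic, whereas the paper runs an iterative shortening procedure starting from the face boundary, at each step replacing a non-geodesic arc by the \emph{closest} geodesic in the sense of Lemma~\ref{Lem:Shasminimum}. The point of that choice is exactly to maintain the invariant that the current cycle $C_n$ cannot be crossed by any geodesic (Corollary~\ref{Cor:closestgeodesic}), so that at every step the relevant geodesic lies outside $C_n$ and the easy ``outside'' computation applies. Your approach forgoes this invariant, and so must confront crossing geodesics directly. Your ``outside'' and ``inside'' cases are correct (in the outside case you do not even need the closest geodesic --- any element of $\mathcal{S}_1\cup\mathcal{S}_2$ gives the strict length drop), but the crossing case is where the real content lies, and there your proposal stops at a description of what would need to be shown (``planar bookkeeping'', ``the shortfall is realised at some inside sub-path'') rather than a proof. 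That is a genuine gap: it is not evident that any \emph{single} maximal sub-path of $\sigma$ admits a strictly shortening replacement, and no argument is given.

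For what it is worth, I believe your claim is true and the gap is fillable, but only by a global rather than local argument, roughly as follows. Decompose $\sigma$ into maximal portions $P_1,\dots,P_m$ with endpoints $x=a_0,a_1,\dots,a_m=y$ on $C$, each portion having interior either strictly inside or strictly outside $C$ (or lying along $C$). Minimality of $C$ forces, for each $i$, $|P_i|\geq|\alpha_i|$ for one of the two arcs $\alpha_i$ of $C$ joining $a_{i-1}$ to $a_i$: for an outside portion the relevant arc is the one facing $P_i$ (replacing the far arc by $P_i$ keeps $C$, hence $F$, inside), and for an inside portion it is the arc on the far side of $F$ (replacing it by $P_i$ keeps $F$ inside). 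The concatenation $\alpha_1\cdots\alpha_m$ is a walk from $x$ to $y$ in the cycle $C$, hence has length at least $\min(|xCy|,|yCx|)$, so $d(x,y)=\sum_i|P_i|\geq\min(|xCy|,|yCx|)$, contradicting the non-geodeticity of the pair $(x,y)$. Something of this kind (or a retreat to the paper's no-crossing invariant) is needed to close your argument; as written, the proof is incomplete at precisely the step you flag.
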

\begin{proof}
We want to show that if $F$ is a face of $G$, then it is contained in a geodetic cycle and then apply Corollary~\ref{Cor:geocyclesuffices}. The idea of the proof is to construct a sequence of cycles $C_0,C_1,\ldots$ each containing $F$, with the lengths $|C_i|$ strictly decreasing, so that the sequence is finite and the last cycle is a geodetic cycle.

Let us start with the cycle $C_0$ coinciding with the boundary of the face $F$. If $C_0$ is geodetic we are done, otherwise there are two points $x,y$ such that both $xC_0y$ and $yC_0x$ are not geodesics. Consider a geodesic $\Gamma_1$ joining them: since $F$ is a face, $\Gamma_1$ must lie outside the cycle $C_0$. Therefore, we have three paths $xC_0y$, $yC_0x$ and $\Gamma_1$ between $x$ and $y$. Assume without loss of generality that $xC_0y$ is between $yC_0x,\Gamma_1$. Then the union of $\Gamma_1$ with $yC_0x$ yields a new cycle $C_1$ with the following properties:
\begin{itemize}
\item $|C_1|=|yC_0x|+|\Gamma_1|<|yC_0x|+|xC_0y|=|C_0|$, since $xC_0y$ is not a geodesic while $\Gamma_1$ is;
\item the face $F$ is inside the cycle $C_1$ since it was inside (or rather, equal to) $C_0$ which in turn is inside $C_1$.
\end{itemize}
Using Lemma~\ref{Lem:Shasminimum} we can require the geodesic $\Gamma_1$ to be the closest to the cycle $C_0$ with respect to the arc $xC_0y$. Note that the cycle $C_1$ cannot be crossed by any geodesic: a side of the cycle is made by a face, which does not contain any strictly inner edge, and the other side is bounded by the closest geodesic, which cannot be crossed by Corollary~\ref{Cor:closestgeodesic}.

We can iterate this procedure: assume by induction that after $n$ steps, we are left with a cycle $C_n$ such that the face $F$ is still inside $C_n$ and $C_n$ cannot be crossed by geodesics. If $C_n$ is a geodetic cycle we are done, otherwise there are two points $x,y\in C_n$ that prevent that, and we can find a closest geodesic $\Gamma_{n+1}$ as before, creating a new cycle $C_{n+1}$. We conclude that the face $F$ is inside $C_{n+1}$ and $|C_{n+1}|<|C_n|$. Since these lengths are strictly decreasing, the process halts after finitely many steps, yielding the desired geodetic cycle.
\end{proof}

\section{Non-amenability and bounded codegree imply hyperbolicity}
One of the assertions of Theorem~\ref{The:maintheorem} was proved in \cite{NorCir} using random walks. In this section we provide a purely geometric proof of that statement.

Bowditch proved in \cite{BowNot} many equivalent conditions for hyperbolicity of metric spaces, one of which is known as \textit{linear isoperimetric inequality}. For our interests, which are planar graphs of bounded degree, 
that condition has been rephrased as in Theorem~\ref{The:Bowditch'scondition} below. Before stating it we need some definitions.

Let us call a finite, connected, plane graph $H$ with $\delta(H)\geq2$ a \textit{combinatorial disk}. Note that all faces of $H$ are bounded by a cycle; let us call $\partial_{top}H$ the cycle bounding the unbounded face of $H$.

\begin{definition}
A combinatorial disk $H$ satisfies a $(k,D)$\textit{-linear isoperimetric inequality} (LII) if $|F|\leq D$ for all bounded faces $F$ of $H$ and the number of bounded faces of $H$ is bounded above by $k|\partial_{top}H|$.
\end{definition}

\begin{definition}
\label{Def:LIIforinfinitegraphs}
An infinite, connected, plane graph $G$ satisfies a LII if there exist $k,D\in \N$ such that the following holds: for every cycle $C\subset G$ there is a combinatorial disk $H$ satisfying a $(k,D)$-LII 
and a map $\varphi\colon H\rightarrow G$ which is a graph-theoretic isomorphism onto its image (so that $\varphi$ does not have to respect the embeddings of $H,G$ into the plane), such that $\varphi(\partial_{top}H)=C$.
\end{definition}
Bowditch's criterion is the following: 
\begin{theorem}[\cite{BowNot}]
\label{The:Bowditch'scondition}
A plane graph $G$ of minimum degree at least 3 and bounded degree is hyperbolic if and only if  $G$ satisfies a LII. 
\end{theorem}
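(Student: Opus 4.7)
The plan is to prove both directions separately, following the classical template that links hyperbolicity to linear filling functions for loops, adapted here to the combinatorial planar setting. The forward direction (LII implies hyperbolicity) is the more standard and easier one, while the converse requires an explicit recursive construction of the filling disk.

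For the LII-implies-hyperbolic direction, I would take a geodesic triangle with sides $\gamma_{1},\gamma_{2},\gamma_{3}$ of total perimeter $\ell$, regard the three sides as a closed walk bounding a cycle $C$ (splitting edges if needed so that $C$ is a genuine cycle), and apply the hypothesis to obtain a combinatorial disk $H$ with at most $k\ell$ bounded faces, each of length at most $D$, together with an isomorphism $\varphi\colon H\to G$ sending $\partial_{top}H$ to $C$. If some vertex $v$ on $\gamma_{1}$ were at $G$-distance greater than $M$ from $\gamma_{2}\cup\gamma_{3}$, then pulling $v$ back to $\partial_{top}H$ would force a long chain in the dual graph of $H$ shielding the preimage of $v$ from the preimages of $\gamma_{2}\cup\gamma_{3}$; since each dual step can decrease the $G$-distance to $\gamma_{2}\cup\gamma_{3}$ by at most $D/2$, such a chain has length at least $2M/D$. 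Summing these dual chains over the many far vertices along $\gamma_{1}$ would produce more than $k\ell$ faces, contradicting the linear bound. This yields a uniform $\delta=\delta(k,D,\Delta(G))$ and hence hyperbolicity.

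For the hyperbolic-implies-LII direction, I would build $H$ for a given cycle $C\subset G$ by divide-and-conquer on $|C|$. If $|C|$ is below a threshold $D_{0}$, take $H$ to be an abstract disk whose $\partial_{top}H$ is a combinatorial copy of $C$, with a single bounded face, and let $\varphi$ be the obvious identification. Otherwise, pick vertices $x,y\in C$ roughly antipodal along $C$ and a $G$-geodesic $\gamma$ from $x$ to $y$; the thin-triangle condition yields $|\gamma|\leq |C|/2+O(\delta)$, so the two cycles $C_{1},C_{2}$ obtained by concatenating each arc of $C$ between $x$ and $y$ with $\gamma$ are both strictly shorter than $|C|$. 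Recursively fill $C_{1},C_{2}$ by combinatorial disks $H_{1},H_{2}$, and glue $H_{1},H_{2}$ along a fresh abstract copy of $\gamma$ to form $H$. A standard recursion accounting, summing the $O(\delta)$ overhead at each split, gives total area $O(|C|)$ with uniformly bounded face sizes.

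The main obstacle is the hyperbolic-implies-LII direction, and inside it the delicate point is that $\varphi$ is only required to be an isomorphism onto its image: $H$ carries its own abstract planar structure, with no relation to how $\varphi(H)$ sits in $G$. One has to check that the recursive gluing really produces a combinatorial disk at every step, rather than some non-planar $2$-complex, even when the two sub-disks $\varphi(H_{1})$ and $\varphi(H_{2})$ overlap inside $G$. Managing this abstract planar bookkeeping, in tandem with the uniform control on face sizes and on total area coming from the $O(\delta)$ surplus at each division, is where the bulk of the technical effort lies.
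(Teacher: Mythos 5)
The paper does not prove this statement at all: it is quoted from Bowditch's notes \cite{BowNot} as a black box (rephrased for bounded-degree plane graphs), so there is no internal proof to compare against, and your attempt must stand on its own. It does not: there is a genuine gap in each direction. In the direction ``LII implies hyperbolic'', your counting step is exactly where the classical difficulty lies and does not overcome it. If $v\in\gamma_1$ is at distance $M$ from $\gamma_2\cup\gamma_3$, then the roughly $M$ vertices of $\gamma_1$ within $M/2$ of $v$ each need a dual chain of length about $M/D$; even granting full disjointness of these chains (which you do not justify, and which fails outright for adjacent vertices, whose chains can share almost all their faces) you obtain only $M^2/D\le k\ell$, i.e.\ $M\lesssim\sqrt{kD\ell}$. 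That bound depends on the perimeter $\ell$ and therefore does not give a uniform $\delta$; this is precisely why $\R^2$ is not refuted by the argument for triangles of moderate size. The standard repair is a localization step --- cut off a sub-quadrilateral of perimeter $O(M)$ around $v$, using that $M$ is the maximal distance from $\gamma_1$ so the connecting arcs have length at most $M$, and then prove a genuine coarea-type lower bound of order $M^2/D^2$ on the area of that small loop --- after which $M^2/D^2\le k\cdot O(M)$ gives a bound on $M$ independent of $\ell$. Both the localization and the quadratic area lower bound are the real content of this implication and are absent from your sketch.

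In the direction ``hyperbolic implies LII'', the antipodal divide-and-conquer does not give a linear area bound. The chord satisfies only $|\gamma|=d(x,y)\le|C|/2$ (this is trivial and uses no hyperbolicity), so the best you can say is $|C_1|+|C_2|\le 2|C|$ with each $|C_i|\le\tfrac34|C|$; the recursion tree then has on the order of $|C|^{\log_{4/3}2}\approx|C|^{2.4}$ leaves, and the ``standard recursion accounting'' yields a superquadratic, not linear, bound. (A long thin ``banana'' cycle around a geodesic segment, where antipodal points are at distance essentially $|C|/2$, shows the split genuinely fails to shrink the pieces.) The standard proof instead runs Dehn's algorithm: by the lemma that $k$-local geodesics in a $\delta$-hyperbolic space are global quasi-geodesics, every cycle longer than some $L_0(\delta)$ contains a sub-arc of length at most $L_0$ that can be replaced by a strictly shorter geodesic at an area cost of $O(1)$ faces, so iterating reduces $|C|$ by at least one per step and gives area $O(|C|)$. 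Note also that the paper's Definition~\ref{Def:LIIforinfinitegraphs} requires $\varphi$ to be an isomorphism onto its image, hence injective; your glued complex need not map injectively once the sub-disks overlap in $G$, whereas for a plane graph the natural embedded choice is simply the subgraph of $G$ inside $C$, which is how the criterion is actually deployed in Corollary~\ref{Cor:LIIinsidecycles} and Lemma~\ref{Lem:converseofLII}.
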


An immediate corollary is the following:
\begin{corollary}
\label{Cor:LIIinsidecycles}
Let $G$ be a plane graph of minimum degree at least 3, bounded degree and codegree. Suppose there exists $k$ such that for all cycles $C\subset G$ the number of faces of $G$ inside $C$ is bounded above by $k|C|$. Then $G$ is hyperbolic.
\end{corollary}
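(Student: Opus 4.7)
The plan is to verify that $G$ satisfies the linear isoperimetric inequality of Definition~\ref{Def:LIIforinfinitegraphs} and then invoke Theorem~\ref{The:Bowditch'scondition}. The natural candidate for the combinatorial disk is essentially forced on us: for a given cycle $C\subset G$, take $H$ to be the subgraph of $G$ consisting of $C$ together with all vertices and edges lying strictly inside $C$, equipped with the plane embedding inherited from $G$, and let $\varphi\colon H\to G$ be the inclusion map. This $\varphi$ is manifestly a graph-theoretic isomorphism onto its image.

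I would then verify that $H$ is a combinatorial disk. Finiteness follows from the accumulation-freeness of $G$, which guarantees only finitely many vertices inside $C$. Connectedness is inherited from that of $G$: since $C$ is a topological separator of the plane, any vertex strictly inside $C$ lies in a component of $G\setminus C$ that must touch $C$. For the minimum-degree condition, a vertex of $H$ lying on $C$ retains the two $C$-edges incident with it (degree $\geq 2$), while a vertex strictly inside $C$ keeps \emph{all} its $G$-neighbours (they all lie on or inside $C$), so its $H$-degree equals its $G$-degree and is thus $\geq 3$. Hence $\delta(H)\geq 2$.

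Next I would identify $\partial_{top}H$ and count the bounded faces of $H$. Since every point of $H$ lies on or inside $C$ in $\R^2$, the unbounded component of $\R^2\setminus H$ is bounded precisely by $C$, so $\partial_{top}H=C$ and $\varphi(\partial_{top}H)=C$ as required. The bounded faces of $H$ are in bijection with the faces of $G$ inside $C$: by hypothesis their number is at most $k|C|=k|\partial_{top}H|$, and each has length at most $\Delta(G^*)=:D$ by bounded codegree. Thus $H$ satisfies a $(k,D)$-LII, and Theorem~\ref{The:Bowditch'scondition} yields the hyperbolicity of $G$.

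The only delicate point I anticipate is the technical requirement, implicit in the definition of combinatorial disk, that every face of $H$ be bounded by a cycle. This is automatic once $H$ is $2$-connected, and the hypothesis $\delta(G)\geq 3$ together with the presence of the cycle $C$ in $H$ should be enough to rule out pathologies such as pendant subtrees inside $C$ or cut-vertices generated by the restriction; if necessary one can first pass to the $2$-connected block of $H$ containing $C$, whose outer boundary is still $C$ and which inherits the LII with the same constants. Beyond this bookkeeping the argument is essentially immediate from Bowditch's criterion.
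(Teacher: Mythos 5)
Your proposal is correct and follows essentially the same route as the paper: take $H$ to be the portion of $G$ on and inside $C$, observe that its bounded faces are the faces of $G$ inside $C$ (at most $k|C|$ of them, each of length at most $\Delta(G^*)$), and invoke Theorem~\ref{The:Bowditch'scondition}. You are in fact somewhat more careful than the paper's two-line proof, in particular about verifying the combinatorial-disk conditions and identifying $\partial_{top}H$ with $C$.
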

\begin{proof}
For every cycle $C$, let $H$ be the subgraph of $G$ induced by all vertices inside $C$. Then $H$ is a finite plane graph of codegree bounded above by $\Delta(G^*)$. By assumption, the number of bounded faces of $H$ is bounded above by $k|C|=k|\partial_{top}H|$. Thus $G$ satisfies a LII, and $G$ is hyperbolic by Theorem~\ref{The:Bowditch'scondition}. 
\end{proof}
We will see a partial converse of this statement in Lemma~\ref{Lem:converseofLII}. \\
We would like to apply this criterion to our non-amenable, bounded codegree graph $G$, but $G$ might have minimum degree less than 3. Therefore, we will 
perform on $G$ the following construction in order to obtain a graph $G'$ of minimum degree 3 without affecting any of the other properties of $G$ we are interested in.

Define a \textit{decoration} of $G$ to be a maximal connected induced subgraph $H$ with at most 2 vertices in the boundary $\partial H$. Perform the following procedure on each decoration $H$ of the graph $G$: if $|\partial H|=1$ delete $H$, while if $\partial H=\{v,w\}$ delete $H$ and add the edge $\{v,w\}$ if not already there. Call the resulting graph $G'$. Note that the minimum degree of $G'$ is at least 3: any vertex of $G$ of degree at most 2 belongs to a decoration, and if $H$ is a decoration and $x\in\partial H$ then by maximality $x$ sends at least 3 edges to $G\setminus(H\cup\partial H)$ when $|\partial H|=1$ and at least 2 edges when $|\partial H|=2$. Note also that the maximum degree of $G'$ is at most $\Delta(G)$. Note also that the maximum degree of $G'$ is at most $\Delta(G)$.

Now assume $G$ is non-amenable with Cheeger constant $c(G)$; then the size of decorations is bounded above by $\frac{2}{c(G)}$ and thus the size of any face of $G$ is reduced by at most $\frac{2}{c(G)}$ after the procedure, so $\Delta(G'^*)$ is finite if $\Delta(G^*)$ is. Consider the identity map $I\colon V(G')\hookrightarrow V(G)$. Then
\[d_{G'}(x,y)\leq d_G(I(x),I(y))\leq \frac{2}{c(G)}d_{G'}(x,y)\]
and every vertex in $G$ is within $\frac{2}{c(G)}$ from a vertex of $f(V(G'))$, hence $I$ is a quasi-isometry between $G$ and $G'$. Thus $G$ is non-amenable, since non-amenability is a quasi-isometric invariant for graphs of bounded degree (see for instance Theorem~11.10 in \cite{DruKapLec}). For the same reason, if we can prove that $G'$ is hyperbolic then so is $G$.
\begin{theorem}
\label{The:hyperbolic}
If $G\in \pp$ is non-amenable and it has bounded codegree then $G$ is hyperbolic.
\end{theorem}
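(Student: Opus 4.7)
The plan is to apply Corollary~\ref{Cor:LIIinsidecycles} to the graph $G'$ constructed just above the theorem statement. The preceding discussion already establishes that $G'$ has minimum degree at least $3$, maximum degree at most $\Delta(G)$, is non-amenable, has bounded codegree (since $\Delta(G'^*)$ is finite whenever $\Delta(G^*)$ is), and is quasi-isometric to $G$. Because hyperbolicity is a quasi-isometric invariant among graphs of bounded degree, it suffices to prove that $G'$ is hyperbolic.

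To verify the hypothesis of Corollary~\ref{Cor:LIIinsidecycles} for $G'$, I would fix an arbitrary cycle $C \subset G'$ and let $S$ denote the set of vertices of $G'$ strictly inside $C$. Because any edge between $S$ and $V(G') \setminus S$ must be represented by an arc meeting $C$ in the plane, one of its endpoints lies on $C$; hence $\partial S \subseteq V(C)$ and $|\partial S| \leq |C|$. Non-amenability of $G'$ with Cheeger constant $c(G') > 0$ then yields $|S| \leq |C|/c(G')$, so the number of vertices lying inside or on $C$ is at most $(1 + 1/c(G'))|C|$.

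Next, I would bound the number of faces of $G'$ inside $C$ by a double-counting argument. Since $G'$ has maximum degree at most $\Delta(G)$, the number of edges with both endpoints inside or on $C$ is at most $\tfrac{1}{2}\Delta(G)\,(1+1/c(G'))\,|C|$. Every bounded face of $G'$ inside $C$ has length between $3$ and $\Delta(G'^*)$, and the total length of these faces is at most twice the number of edges inside or on $C$. Combining these gives an upper bound of the form $k|C|$ for the number of faces inside $C$, with $k$ depending only on $\Delta(G)$ and $c(G')$. Corollary~\ref{Cor:LIIinsidecycles} then implies $G'$ is hyperbolic, and transferring back along the quasi-isometry $G \simeq G'$ finishes the proof.

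There is no single difficult step: the heavy lifting has been done by Bowditch's criterion (Theorem~\ref{The:Bowditch'scondition}), its specialisation to Corollary~\ref{Cor:LIIinsidecycles}, and the decoration procedure producing $G'$. The only subtlety to watch is the planarity argument giving $\partial S \subseteq V(C)$, which is what turns the combinatorial Cheeger bound on $|S|$ into the geometric statement that the number of faces inside $C$ grows linearly in $|C|$.
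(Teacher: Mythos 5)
Your proposal is correct and follows essentially the same route as the paper: reduce to $G'$, use non-amenability and $\partial S\subseteq V(C)$ to show the number of vertices inside a cycle $C$ is linear in $|C|$, convert this to a linear bound on the number of faces inside $C$, and invoke Corollary~\ref{Cor:LIIinsidecycles}. The only (immaterial) difference is that you pass from vertices to faces by double-counting edge--face incidences, whereas the paper simply notes that each vertex is incident with at most $\Delta(G)$ faces.
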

\begin{proof}
Starting from $G$, perform the construction of the auxiliary graph $G'$ as above: the resulting graph $G'$ is non-amenable, has bounded codegree and has minimum degree at least 3.

Let $C$ be a cycle and $S\subset G'$ the (finite but possibly empty) subset of vertices lying strictly inside $C$; by non amenability we have
\[|C|\geq|\partial S|\geq c(G')|S|.\]
Let us focus on the finite planar graph $G'[C\cup S]$ induced by $C\cup S$ and let $F$ be the number of faces inside it. Since each vertex is incident with at most $\Delta(G)$ faces, we have $|C\cup S|\Delta(G)\geq F$. Thus
\[(1+c(G'))|C|\geq c(G')(|S|+|C|)\geq c(G')\frac{1}{\Delta(G)}F\]which is equivalent to $F\leq\frac{(1+c(G'))\Delta(G)}{c(G')}|C|$. Since $\Delta(G'^*)$
is finite, by Corollary~\ref{Cor:LIIinsidecycles} $G'$ is hyperbolic. By the remark above, $G$ is hyperbolic too.
\end{proof}

\section{Hyperbolicity and weak non-amenability imply non-amenability}
Let us prepare the last step of the proof of Theorem~\ref{The:maintheorem} with a Lemma.
\begin{lemma}
\label{Lem:closedwalk}
Suppose $G$ has bounded codegree and no unbounded faces. Then for every finite connected induced subgraph $S$ of $G$, there exists a closed walk $C$ such that $S$ is inside $C$ and at least $|C|/\Delta(G^*)$ vertices of $C$ are in the boundary of $S$.
\end{lemma}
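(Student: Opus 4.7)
The plan is to construct $C$ as the boundary of the ``hole'' left in $G$ after deleting $S$. Let $G':=G\setminus S$ be the plane subgraph obtained by removing the vertices of $S$ along with their incident edges (inheriting the planar embedding from $G$). Since $S$ is finite and connected, all faces of $G$ having at least one boundary vertex in $S$ merge, once the $S$-vertices are removed, into a single face $F_S$ of $G'$ containing the region previously occupied by $S$; the hypothesis that $G$ has no unbounded faces forces $F_S$ to be bounded. Let $C$ be the closed walk in $G'$ tracing the outer boundary component of $F_S$. By construction $S$ is inside $C$.

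To bound $|C \cap \partial S|$, I would decompose $C$ into arcs. Every edge of $C$ lies on the boundary of some face $F$ of $G$ that was incident to $S$, because $C$ separates the merged face $F_S$ from the faces of $G$ not incident to $S$. Grouping consecutive edges of $C$ that share the same such face yields a partition of $C$ into arcs $A_1,\dots,A_m$, where each $A_i$ traces a maximal run of non-$S$ vertices along the boundary of some face $F_i$ of $G$. Since $|F_i| \le \Delta(G^*)$, the arc $A_i$ has at most $\Delta(G^*)$ vertex visits $q_i$ and contributes $q_i - 1$ edges to $C$. Crucially, by the maximality of the non-$S$ run defining $A_i$, both endpoints of $A_i$ are adjacent on $F_i$'s boundary to some $S$-vertex, hence lie in $\partial S$. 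Consecutive arcs in the cyclic arrangement share their common endpoint, so each of the $m$ junctions yields a visit of $C$ to $\partial S$; this gives $|C \cap \partial S| \ge m$. On the other hand,
\[
|C| = \sum_{i=1}^m (q_i - 1) \le m(\Delta(G^*) - 1),
\]
so $|C \cap \partial S| \ge m \ge |C|/(\Delta(G^*) - 1) \ge |C|/\Delta(G^*)$, as required.

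The main obstacle is the topological justification underlying the construction. One needs to verify that $F_S$ really is a single bounded face of $G'$: the connectedness of $S$ forces the $G$-faces incident to $S$ to merge, by chaining through faces sharing consecutive $S$-vertices along a spanning tree of $S$, while the absence of unbounded faces in $G$ keeps $F_S$ bounded. A secondary subtlety is that $G'$ may possess components entirely surrounded by $S$, in which case $F_S$ is not simply connected; working with the outer boundary of $F_S$ sidesteps this, and the arc decomposition together with the counting above go through unchanged on that single boundary component. One should also check that the arc partition is well defined, which amounts to the observation that each edge of $C$ has a unique $S$-incident face of $G$ on one side.
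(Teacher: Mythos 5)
Your overall strategy is sound and the counting at the end is the same as the paper's, but you build the walk $C$ on the opposite side of $\partial S$: you take the outer boundary of the face of $G\setminus S$ that swallows $S$, whereas the paper takes $H$ to be the finite graph consisting of $S$ together with \emph{all edges incident with $S$} (so $V(H)=S\cup\partial S$) and lets $C$ be the boundary walk of the unbounded face of $H$. The paper's choice makes both delicate points of your argument disappear at once: $H$ is finite and connected, so the outer boundary walk exists with no topological discussion of $F_S$ at all (in particular no appeal to ``no unbounded faces'' is needed to get boundedness of a region -- that hypothesis is used only to bound the facial arcs); and the arcs come for free as the segments of $C$ between consecutive visits to $\partial S$, whose interior vertices lie in $S$ and hence have all their $G$-edges present in $H$, so each such segment is automatically a facial walk of $G$ of length less than $\Delta(G^*)$.

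Beyond the topological overhead you already flag, there is one step in your version that is genuinely not right as written: the rule ``group consecutive edges of $C$ sharing the same incident face of $G$'' does not guarantee that each arc $A_i$ is a \emph{contiguous} subwalk of the facial walk of $F_i$. At a vertex $v$ of $C$, the incoming and outgoing edges may be separated in the rotation at $v$ by an edge from $v$ into $S$, and yet the face of $G$ seen on the $F_S$-side of both edges can be the same face $F$ (visiting $v$ at two different corners). Your rule would then merge two non-contiguous pieces of $F$'s facial walk into one arc, and the bound $q_i\le\Delta(G^*)$ via ``proper subwalk of a facial walk'' no longer follows; worse, such a vertex $v$ is a junction that your count of $\lvert C\cap\partial S\rvert\ge m$ silently drops. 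The repair is easy and brings you back to the paper's proof: break $C$ at every corner at which an edge into $S$ intervenes in the rotation (equivalently, between consecutive visits to $\partial S$ in the paper's version), rather than at every change of face. With that modification, each arc is a contiguous facial subwalk, its endpoints lie in $\partial S$, and your inequality $\lvert C\rvert\le m\bigl(\Delta(G^*)-1\bigr)$ goes through.
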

\begin{proof}
Let $H$ be the subgraph of $G$ spanned by $S$ and all its incident edges. Note that $H$ contains all vertices in $\partial S$, but no edges joining two vertices of $\partial S$. Then $H$ is a finite plane graph by definition. We let $C$ be the closed walk bounding the unbounded face of $H$. We claim that $C$ has the desired properties.

To see this, let $x_1,\ldots,x_n$ be an enumeration of the vertices of $\partial S$ in the order they are visited by $C$. Then the subwalk $x_iCx_{i+1}$ is contained in some face of $G$: all interior vertices of $x_iCx_{i+1}$ lie in $S$ by our definitions,and so all edges incident with those vertices are in $H$; therefore, since $x_iCx_{i+1}$ is a facial walk in $H$, it is also a facial walk in $G$. Since $x_iCx_{i+1}$ is contained in some face of $G$ and $G$ has only bounded faces, we have $|x_iCx_{i+1}| < \Delta(G^*)$. Applying this to all $i$, we obtain 

\[|C\cap \partial S|=n>\sum_{i=1}^n\dfrac{|x_iCx_{i+1}|}{\Delta(G^*)}\geq\dfrac{|C|}{\Delta(G^*)}.\qedhere\]
\end{proof}

We need a result which is almost a converse of Corollary~\ref{Cor:LIIinsidecycles}.
\begin{lemma}
\label{Lem:converseofLII}
Let $G\in \pp$ be hyperbolic and weakly non-amenable. Then there exists $k$ such that for all cycles $C\subset G$ the number of faces of $G$ inside $C$ is bounded above by $k|C|$. 
\end{lemma}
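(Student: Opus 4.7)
The plan is to invoke Bowditch's characterization of hyperbolicity via the linear isoperimetric inequality, i.e.\ Theorem~\ref{The:Bowditch'scondition}. By Theorem~\ref{The:boundedcodegree} the hypotheses already yield $D:=\Delta(G^*)<\infty$. I then apply the decoration construction of Section~4 to obtain an auxiliary graph $G'$ with $\delta(G')\ge 3$: weak non-amenability forces each decoration $H$ to satisfy $|H|\le f(|\partial H|)\le f(2)$, so the construction is legitimate and $G'$ is quasi-isometric to $G$, hence still hyperbolic and of bounded degree and bounded codegree. Since decorations have uniformly bounded size, a linear bound on the faces of $G'$ inside any cycle transfers to $G$ up to a multiplicative constant, so it suffices to establish the conclusion for $G'$.

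Applying Theorem~\ref{The:Bowditch'scondition} to $G'$ yields constants $k, D'$ such that $G'$ satisfies a $(k,D')$-LII: for every cycle $C\subset G'$ there exists a combinatorial disk $H$ filling $C$ with at most $k|C|$ bounded faces. Since $\delta(G')\ge 3$, the natural planar filling $H_0:=G'[S\cup C]$, where $S$ is the set of vertices strictly inside $C$, is itself a combinatorial disk with $\partial_{top}H_0=C$, and its bounded faces coincide with the faces of $G'$ inside $C$. A standard Euler-formula computation, together with the fact that every bounded face of $H_0$ has length at least $3$, shows that bounding the number of these faces linearly in $|C|$ is equivalent to bounding $|S|$ linearly in $|C|$.

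The main obstacle is the upgrade from Bowditch's existential bound (some filling $H$) to a bound on the specific natural filling $H_0$: the isomorphism $\varphi\colon H\to G'$ in Definition~\ref{Def:LIIforinfinitegraphs} is not required to respect planar embeddings, so $\varphi(H)$ may differ from $H_0$ on the interior. One approach I would pursue is to exploit planarity of $G'$ to show that any such abstract filling can be replaced by one whose image lies in the inside of $C$ (using that $G'$ is infinite while the filling is finite, so that it must embed into the bounded component of $\R^2\setminus C$ in $G'$'s embedding), and then to deduce the bound on $H_0$ via a monotonicity argument on the face count. An alternative I would consider is a hands-on construction via the closest-geodesic machinery of Section~3 (Lemma~\ref{Lem:Shasminimum} and Corollary~\ref{Cor:closestgeodesic}): iteratively tighten $C$ by cutting off sub-regions using closest-geodesic chords, each such sub-region being enclosed in a geodetic cycle of length $\le 6\delta$ (by Lemma~\ref{Lem:boundedgeocycle}) and hence containing $O(1)$ faces (as in the proof of Corollary~\ref{Cor:geocyclesuffices}), while the total number of sub-regions is controlled by $O(|C|)$.
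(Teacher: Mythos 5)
Your setup is right---Bowditch's criterion (Theorem~\ref{The:Bowditch'scondition}) applied to the auxiliary graph $G'$ is indeed the starting point, and you correctly observe that the isomorphism $\varphi$ in Definition~\ref{Def:LIIforinfinitegraphs} need not respect the embeddings. But at that point your argument stops: you name the obstacle and offer two routes, neither of which is carried out, and neither of which is the way the difficulty is actually resolved. The resolution is that you do \emph{not} need to replace the abstract filling by the natural planar one. Work with $H$ as given: each bounded face of $H$ has boundary a cycle $F_i$ of length at most $D$, and $\varphi(F_i)=:C_i$ is a cycle of $G$ of length at most $D$, with at most $k'|C|$ such cycles. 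Now weak non-amenability does the work exactly here: if $S_i$ is the set of vertices of $G$ strictly inside $C_i$, then $\partial S_i\subseteq C_i$, so $|S_i|\leq f(|\partial S_i|)\leq f(D)$, hence the number of faces of $G$ inside $C_i$ is at most $\Delta(G)f(D)$ (or $1$ if $S_i=\emptyset$). Since the faces of $G$ inside $C$ are accounted for by the faces inside the $C_i$, summing over $i$ gives the bound $k|C|$ with $k=k'(1+\Delta(G)f(D))$. In other words, the only thing you need from each face of the abstract filling is that its image is a \emph{short cycle of $G$}, and weak non-amenability converts ``short cycle'' into ``boundedly many faces of $G$ inside it''---the embedding of $\varphi(H)$ is irrelevant.

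Your two proposed repairs both have problems. In the first, the claim that the finite filling ``must embed into the bounded component of $\R^2\setminus C$'' is unjustified: a graph isomorphism onto a subgraph of $G'$ can perfectly well have image meeting the unbounded side of $C$, and nothing about finiteness prevents this; moreover the subsequent ``monotonicity argument on the face count'' is not specified and is not obviously available, since the faces of $\varphi(H)$ need not be faces of $G'$. In the second, the closest-geodesic surgery of Section~3 is designed to shrink a cycle around a \emph{single} face until it becomes geodetic; to use it here you would have to decompose the entire interior of $C$ into regions bounded by geodetic cycles and, crucially, show that the number of regions produced is $O(|C|)$---this last count is precisely the linear isoperimetric inequality you are trying to prove, so as stated the argument is circular (or at least leaves the essential estimate unproved). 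I would also note that invoking Theorem~\ref{The:boundedcodegree} to get $\Delta(G^*)<\infty$ is harmless but not needed: the bound $D$ on face lengths of the combinatorial disk comes for free from Bowditch's criterion.
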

\begin{proof}
Using the auxiliary graph $G'$ from the previous section, we may assume that $G$ has minimum degree at least 3. Let $C$ be a cycle of $G$. Since $G$ is hyperbolic, by Theorem~\ref{The:Bowditch'scondition} there exists a combinatorial disk $H$ satisfying a $(k',D)$-LII with an isomorphism $\varphi$ from $H$ to a subgraph of $G$ such that $\varphi(\partial_{top}H)=C$. The boundaries $F_i,i\in I,$ of bounded faces of $H$ are sent by $\varphi$ to cycles $C_i:=\varphi(F_i)$ of $G$ so that $|I|\leq k'|C|$, and the bound $D$ on the length of bounded faces of $H$ is an upper bound to the length of those cycles $C_i$. Let $S_i$ be the (finite) set of vertices of $G$ strictly inside $C_i$, so that $\partial S_i\subseteq C_i$. Let $f\colon\N\rightarrow\N$ a monotone increasing diverging function witnessing the weak non-amenability of $G$, i.e. $|S|\leq f(|\partial S|)$ for all finite non-empty $S\subset G$. Then
\[|S_i|\leq f(|\partial S_i|)\leq f(|C_i|)\leq f(D),\]
for all nonempty $S_i$. Let $F(C_i)$ be the number of faces of $G$ inside $C_i$; then for a nonempty $S_i$ we have $F(C_i)\leq\Delta(G)|S_i|$ because no vertex can meet more than $\Delta(G)$ faces. In conclusion
\[|\{\text{faces inside }C\}|=\sum_{i\in I} F(C_i)\leq\sum_{i\colon S_i=\emptyset}1+\sum_{i\colon S_i\neq\emptyset}\Delta(G)|S_i|\leq|I|+\Delta(G)f(D)|I|,\]
from which by setting $k:=k'+\Delta(G)f(D)k'$ the assertion follows. 
\end{proof}

Note that in order to prove the non-amenability of a graph $G$ it suffices to check that $|\partial S|\geq c|S|$ for some constant $c>0$ and all finite induced connected subgraphs $S$, instead of all finite subsets. Indeed, if we assume so and if $S$ is a finite induced subgraph with components $S_1,\ldots,S_n$, then
\[|\partial S|=\left|\bigcup_{i=1}^n\partial S_i\right|\geq\dfrac{1}{\Delta(G)}\sum_{i=1}^n |\partial S_i|\geq \dfrac{c}{\Delta(G)}\sum_{i=1}^n|S_i|=\dfrac{c}{\Delta(G)}|S|,\]
where the first equality follows from $\partial S_i\cap S_j=\emptyset$ for all $i\neq j$ ($S$ is induced) and the first inequality holds because the boundaries $\partial S_i$ can overlap, but no vertex of $\partial S$ belongs to more than $\Delta(G)$ of them.

\begin{theorem}
\label{The:nonamenable}
If $G\in \pp$ is hyperbolic and weakly non-amenable then $G$ is non-amenable. 
\end{theorem}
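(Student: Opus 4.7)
The strategy is to combine Lemma~\ref{Lem:closedwalk} with Lemma~\ref{Lem:converseofLII} by a double-counting of face-corners. By the paragraph immediately preceding the theorem, it suffices to exhibit a constant $c>0$ such that $|\partial S|\geq c|S|$ for every non-empty finite connected induced subgraph $S\subset G$, so fix such an $S$.

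By Theorem~\ref{The:boundedcodegree}, $G$ has bounded codegree, so (together with the no-unbounded-face hypothesis inherited from Theorem~\ref{The:maintheorem}(3)) Lemma~\ref{Lem:closedwalk} applies and produces a closed walk $W$ enclosing $S$ with
\[
|W|\;\leq\;\Delta(G^*)\,|W\cap\partial S|\;\leq\;\Delta(G^*)\,|\partial S|.
\]
Let $N$ denote the number of faces of $G$ lying inside $W$. Each vertex $v\in S$ that lies strictly inside $W$ contributes its $\deg(v)\geq 1$ face-corners to bounded faces of $G$ inside $W$, and each such face has at most $\Delta(G^*)$ corners in total; hence the number of vertices of $S$ strictly inside $W$ is at most $\Delta(G^*)\,N$. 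The remaining vertices of $S$ lie on $W$ and so number at most $|W|$, giving $|S|\leq\Delta(G^*)\,N+|W|$.

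It remains to bound $N$ linearly in $|W|$, and this is where Lemma~\ref{Lem:converseofLII} enters. The walk $W$ need not be a simple cycle, but since it is the outer-face boundary walk of the finite connected plane graph $H$ spanned by $S$ and its incident edges, its edge set decomposes (after discarding any bridges traversed twice, which do not enclose any interior region) into a family of simple cycles $C_1,\ldots,C_m\subset G$ with $\sum_i|C_i|\leq|W|$ whose interiors together cover the bounded region of $\R^2\setminus W$. Applying Lemma~\ref{Lem:converseofLII} to each $C_i$ yields $N\leq k\sum_i|C_i|\leq k|W|$ for some uniform $k$. Chaining all the estimates,
\[
|S|\;\leq\;\Delta(G^*)\,N+|W|\;\leq\;\bigl(k\Delta(G^*)+1\bigr)|W|\;\leq\;\bigl(k\Delta(G^*)+1\bigr)\Delta(G^*)\,|\partial S|,
\]
so the Cheeger inequality holds with $c:=1/\bigl((k\Delta(G^*)+1)\Delta(G^*)\bigr)$.

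The single delicate point I expect is the cycle-decomposition step: one must verify, using the planarity of $G$ and the specific structure of $W$ as the outer-face boundary walk of a finite connected plane subgraph, that the bounded region enclosed by $W$ is indeed covered by the interiors of simple subcycles $C_i$ of $G$ satisfying $\sum_i|C_i|\leq|W|$, so that Lemma~\ref{Lem:converseofLII} can be summed without double-counting faces. Once this bookkeeping is carried out, everything else reduces to the elementary double-counting of face-corners described above.
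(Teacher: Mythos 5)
Your proof is correct and follows essentially the same route as the paper's: reduce to connected induced subgraphs, use Theorem~\ref{The:boundedcodegree} to obtain bounded codegree, bound $|W|\leq\Delta(G^*)|\partial S|$ via Lemma~\ref{Lem:closedwalk}, and bound $|S|$ by a constant times the number of faces inside $W$, which Lemma~\ref{Lem:converseofLII} controls linearly in $|W|$. The only real difference is that you explicitly handle the fact that $W$ is a closed walk rather than a simple cycle before invoking Lemma~\ref{Lem:converseofLII} (via the cycle/block decomposition of the subgraph spanned by $S$ and its incident edges) --- a point the paper's proof silently glosses over --- so this is a welcome refinement of the same argument rather than a different approach.
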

\begin{proof}
By the above consideration it is enough to check the non-amenability only on connected induced subgraphs of $G$. By Theorem~\ref{The:boundedcodegree} we know that $G$ has bounded codegree. Let $S$ be such a subgraph and $C$ as in Lemma~\ref{Lem:closedwalk}. Then
\[|\partial S|\geq|\partial S\cap C|\geq\frac{|C|}{\Delta(G^*)}\]
and thus $\dfrac{|\partial S|}{|S|}\geq\dfrac{1}{\Delta(G^*)}\dfrac{|C|}{|S|}$.

Let $k>0$ be as in Lemma~\ref{Lem:converseofLII}; if $T$ denotes the set of all vertices inside $C$ and $F$ the set of all faces inside $C$, we have
\[\dfrac{|C|}{|T|}=\dfrac{|C|}{|F|}\cdot\dfrac{|F|}{|T|}\geq\dfrac{1}{k}\dfrac{1}{\Delta(G^*)},\]
since each face is incident with at most $\Delta(G^*)$ vertices.
Combining the last two inequalities, we have
\[\dfrac{|\partial S|}{|S|}\geq \dfrac{1}{\Delta(G^*)}\dfrac{|C|}{|S|}\geq\dfrac{1}{\Delta(G^*)}\dfrac{|C|}{|T|}\geq \dfrac{1}{k(\Delta(G^*))^2}.\qedhere\]
\end{proof}

\section{Graphs with unbounded degrees}

We provided enough examples to show that Theorem~\ref{The:maintheorem} is best possible, except that we do not yet know to what extent the bounded degree condition is necessary. Solutions to the following problems would clarify this. Let now $\pp^*$ denote the class of plane graphs with no accumulation point of vertices; so that $\pp$ is the subclass of bounded degree graphs in $\pp^*$.

\begin{problem}
Is there a hyperbolic, amenable, weakly non-amenable plane graph of bounded codegree and no unbounded face in $\pp^*$?
\end{problem}

\begin{problem}
Is every non-amenable bounded codegree graph in $\pp^*$ hyperbolic?
\end{problem}

\clearpage{\pagestyle{empty}\cleardoublepage}
\pagestyle{plain}
\bibliographystyle{abbrv}
\bibliography{biblio}

\end{document}